\documentclass[oneside,english]{amsart}

\usepackage{amssymb}
\usepackage{amscd}

\numberwithin{equation}{section} 
\numberwithin{figure}{section} 
\textwidth=15cm \textheight=20cm
\oddsidemargin=5mm \evensidemargin=5mm \topmargin=-10mm
\theoremstyle{plain}
\newtheorem*{thm*}{Theorem}
\theoremstyle{plain}
\newtheorem{thm}{Theorem}[section]
\theoremstyle{definition}
\newtheorem{defn}[thm]{Definition}
\theoremstyle{plain}
\newtheorem{lem}[thm]{Lemma}
\theoremstyle{plain}

\theoremstyle{plain}

\theoremstyle{remark}

\theoremstyle{remark}
\newtheorem*{acknowledgement*}{Acknowledgement}

\begin{document}

\title[A class of Finsler metrics admitting first integrals]{A class
  of Finsler metrics admitting first integrals}

\author[Bucataru]{Ioan Bucataru}
\address{Faculty of  Mathematics \\ Alexandru Ioan Cuza University \\ Ia\c si, 
  Romania}
\email{bucataru@uaic.ro}
\urladdr{http://www.math.uaic.ro/\textasciitilde{}bucataru/}
\author[Constantinescu]{Oana Constantinescu}
\address{Faculty of  Mathematics \\ Alexandru Ioan Cuza University \\ Ia\c si, 
  Romania}
\email{oanacon@uaic.ro}
\author[Cre\c tu]{Georgeta Cre\c tu}
\address{Departments of  Mathematics \\ Gheorghe Asachi Technical University \\ Ia\c si, 
  Romania}
\email{cretuggeorgeta@gmail.com}

\date{\today}

\begin{abstract}
We use two non-Riemannian curvature tensors, the $\chi$-curvature and the
mean Berwald curvature to characterise a class of Finsler metrics
admitting first integrals. 
\end{abstract}

\subjclass[2000]{53C60, 53B40, 58E30, 49N45}

\keywords{Finsler metric, $\chi$-curvature, scalar mean Berwald curvature,
  first integral}

\maketitle

\section{Introduction}

Finsler geometry is a natural extension of Riemannian geometry and,
while many geometric structures can be extended from the Riemannian to
the Finslerian setting, within the Finslerian context there are many
non-Riemannian geometric quantities, \cite[Ch. 6]{Shen01}.

Existence of first integrals is of great importance, they
provide a lot of information about the corresponding geometry,
including some rigidity results, \cite{CGR20}, \cite{FR16}, \cite{TM03}

In Riemannian geometry, Topalov and Matveev obtained in \cite{TM03},
for two projectively equivalent metrics on an $n$-dimensional manifold, a
set of $n$ first integrals. An extension of this result to the
Finslerian context has been proposed by Sarlet in \cite{Sarlet07}. In
\cite{FR16}, Foulon and Ruggiero have shown the existence of a first
integral for $k$-basic (of isotropic curvature) Finsler surfaces.

It has been proven by Li and Shen in \cite{LS18}, that Finsler metrics of isotropic curvature
can be characterised using the $\chi$-curvature tensor. The
$\chi$-curvature has been introduced by Shen in \cite{Shen13}, in
terms of another important non-Riemannian quantity, the Shen-function ($S$-function) \cite[\S 5.2]{Shen01}.
Since then, a lot of effort has been devoted to study the $\chi$-curvature, \cite{LS15, Mo09, Shen20}. 

In this work we extend the result of Foulon and Ruggiero from \cite{FR16} to Finsler
manifolds of arbitrary dimension, by providing a class of Finsler metrics that
admit first integrals. This class of Finsler metrics is characterised
using the $\chi$-curvature tensor and the mean Berwald curvature,
$E_{jk}=\frac{1}{2}B^i_{ijk}$, where $B^i_{jkl}$ is the Berwald
curvature, \cite[\S 6.1]{Shen01}. Very important in our work is
the fact that the mean Berwald curvature can be expressed also in
terms of the $S$-function. The $S$-function is a Finsler function if
and only if the mean Berwald curvature has maximal possible rank, $n-1$. For a
Finsler function $F$, we denote by $\det g$, the
determinant of its metric tensor $g_{ij}=\frac{1}{2}\frac{\partial^2 F^2}{\partial
  y^i\partial y^j}$, where $y^i$ are the fiber coordinates in the tangent
bundle $TM$.

The main result of this paper provides a class of Finsler metrics that
admit a first integral.

\begin{thm} \label{main} Consider $F$ a Finsler metric that satisfies the
    following two conditions:
\begin{itemize}
\item[i)] the $\chi$-curvature vanishes;
\item[ii)] the mean Berwald curvature has rank $n-1$.
\end{itemize}
Then,
\begin{eqnarray}
\lambda =\frac{-1}{\det g}  \begin{vmatrix}
2FE_{ij} & \displaystyle\frac{\partial
  F}{\partial y^i} \vspace{2mm} \\
\displaystyle\frac{\partial F}{\partial y^j} & 0 
\end{vmatrix} \label{isf}
\end{eqnarray}
 is a first integral for the geodesic spray $G$ of the Finsler metric $F$, which means that $G(\lambda)=0$.
\end{thm}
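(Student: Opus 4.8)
\medskip
\noindent\emph{Proof strategy.} The plan is first to rewrite the right-hand side of \eqref{isf} as a ratio of two ordinary $n\times n$ determinants, and then to differentiate that ratio along $G$; hypothesis (ii) makes the expression meaningful, and hypothesis (i) makes its $G$-derivative vanish. \emph{Step 1 (reformulation of $\lambda$).} Put $F_i:=\partial F/\partial y^i$ and recall that the mean Berwald curvature comes from the $S$-function $S:=\partial G^m/\partial y^m$ through $2E_{ij}=\partial^2 S/\partial y^i\partial y^j$; in particular $E_{ij}$ is homogeneous of degree $-1$ in $y$ and Euler's relation gives $E_{ij}y^j=0$. Hence the symmetric matrix $(2FE_{ij})$ has $y$ in its kernel, and by hypothesis (ii) its rank equals $n-1$, so its kernel is $\mathbb{R}y$ and its adjugate has the form $\operatorname{adj}(2FE)^{ij}=\alpha\,y^iy^j$ for a scalar $\alpha$. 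Expanding the bordered determinant in \eqref{isf} along its last row and column, i.e. $\det\!\begin{pmatrix}A&b\\ b^{\top}&0\end{pmatrix}=-\,b^{\top}\operatorname{adj}(A)\,b$ with $A=(2FE_{ij})$, $b=(F_i)$, and using $y^iF_i=F$ together with the matrix–determinant lemma $\det(2FE_{ij}+F_iF_j)=\det(2FE_{ij})+F_i\operatorname{adj}(2FE)^{ij}F_j=\alpha F^2$, one gets
\[
\lambda=\frac{\det M_{ij}}{\det g_{ij}},\qquad M_{ij}:=2FE_{ij}+F_iF_j .
\]
Hypothesis (ii) is exactly what makes $M_{ij}$ nondegenerate, so $\lambda$ is a well-defined, nowhere-vanishing function on the slit tangent bundle, homogeneous of degree $0$ in $y$. (Equivalently, (ii) says that $S$ is itself a Finsler metric, and one checks that $\lambda=(F/S)^{\,n+1}\det g^S_{ij}/\det g_{ij}$ with $g^S_{ij}=\tfrac12\,\partial^2 S^2/\partial y^i\partial y^j$; we will, however, work only with the displayed form.) Write $M^{ij}$ for the inverse of $M_{ij}$.

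\emph{Step 2 (the action of $G$).} We use the following standard facts about the geodesic spray $G=y^k\big(\partial/\partial x^k-N^l_k\,\partial/\partial y^l\big)$ with $N^i_j=\partial G^i/\partial y^j$: $G(F)=0$; $G(F_i)=\partial F/\partial x^i=N^k_iF_k$ (the last equality being the horizontal parallelism of $F$); and, for any field of symmetric bilinear forms $T_{ij}$,
\[
G(T_{ij})=T_{ij|0}+N^k_iT_{kj}+N^k_jT_{ik},
\]
where $\,|\,$ is the horizontal covariant derivative and $T_{ij|0}:=T_{ij|k}y^k$. Since $g_{ij|0}=0$ (the Landsberg tensor is $y$-transversal), tracing this with $g^{ij}$ gives $G(\log\det g_{ij})=2N^k_k=2S$. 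Applying the same identity to $T=E$ and combining it with $G(F)=0$ and $G(F_i)=N^k_iF_k$ yields
\[
G(M_{ij})=2FE_{ij|0}+N^k_iM_{kj}+N^k_jM_{ik},\qquad\text{hence}\qquad G(\log\det M_{ij})=2F\,M^{ij}E_{ij|0}+2S .
\]
Subtracting the two traces,
\[
G(\log\lambda)=G(\log\det M_{ij})-G(\log\det g_{ij})=2F\,M^{ij}E_{ij|0}.
\]

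\emph{Step 3 (using $\chi=0$).} It remains to prove $M^{ij}E_{ij|0}=0$, and this is precisely where hypothesis (i) enters: the vanishing of the $\chi$-curvature forces the entire covariant derivative $E_{ij|0}$ to vanish. Indeed, differentiating the defining expression of the $\chi$-curvature (which is built from the first horizontal and vertical derivatives of the $S$-function) with respect to $y^j$ and symmetrising in $i,j$ produces an identity expressing $E_{ij|0}$ as a nonzero constant multiple of the symmetrised vertical derivative $\partial\chi_i/\partial y^j+\partial\chi_j/\partial y^i$ of the $\chi$-curvature, all the remaining terms being proportional to $\partial^3 G^l/\partial y^i\partial y^k\partial y^j\cdot y^k$, which is identically zero because $\partial^2 G^l/\partial y^i\partial y^j$ is homogeneous of degree $0$. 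Therefore $\chi=0$ gives $E_{ij|0}=0$, and substituting into the last formula of Step 2 gives $G(\log\lambda)=0$, i.e. $G(\lambda)=0$. The main obstacle is exactly this last step: one has to carry out the vertical differentiation and symmetrisation of the $\chi$-curvature carefully, controlling the nonlinear-connection and Berwald-curvature terms, so as to justify rigorously that $\chi=0$ implies $E_{ij|0}=0$; the rank hypothesis (ii) is used once more here, only to ensure that the inverse $M^{ij}$ appearing in $G(\log\lambda)=2F\,M^{ij}E_{ij|0}$ exists.
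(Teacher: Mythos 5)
Your argument is correct in substance but follows a genuinely different route from the paper. The paper's proof is structural: hypothesis (ii) makes the $S$-function a Finsler metric in its own right, hypothesis (i) (via $\chi=\tfrac12\delta_G S$ and the Rapcs\'ak equation) makes $S$ projectively related to $F$, and then $\lambda$ is identified with $I_0^{-(n+1)}$ for the Painlev\'e--Topalov--Matveev first integral $I_0=\tfrac{S}{F}\bigl(\det g/\det s\bigr)^{1/(n+1)}$ of the projective pair, using a bordered-determinant lemma to convert $\det s$ into the expression \eqref{isf}. You instead compute $G(\lambda)$ head-on: your Step 1 identity $\lambda=\det(2FE_{ij}+F_iF_j)/\det g$ is a correct (and nice) substitute for the paper's Lemma on bordered determinants; your Step 2 logarithmic-derivative computation, using $\nabla g_{ij}=0$ and $G(\log\det g)=2N^k_k$, correctly reduces everything to $G(\log\lambda)=2F\,M^{ij}E_{ij|0}$; and your Step 3 is the known identity $E_{ij|k}y^k=\tfrac12\bigl(\partial\chi_i/\partial y^j+\partial\chi_j/\partial y^i\bigr)$ (the relation between Mo's $H$-curvature and $\chi$), so $\chi=0$ does force $E_{ij|0}=0$. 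Two remarks. First, in Step 3 your explanation of why the leftover terms disappear is slightly off: writing $2\chi_i=G(S_{\cdot i})-S_{x^i}$ and differentiating vertically gives $2\chi_{i\cdot j}=2G(E_{ij})-4N^k_jE_{ik}+S_{\cdot i x^j}-S_{\cdot j x^i}$, and the residual terms $S_{\cdot ix^j}-S_{\cdot jx^i}$ cancel because they are \emph{antisymmetric} in $i,j$ and you symmetrise --- not because they are proportional to $B^l_{ijk}y^k$; with this correction the step closes completely. Second, what each approach buys: yours is more elementary and self-contained (no projective equivalence, no Painlev\'e integral) and isolates the precise curvature identity $\chi=0\Rightarrow H=0$ responsible for the conservation law; the paper's route explains \emph{why} the first integral exists (it is the Painlev\'e invariant of the projective pair $(F,S)$) and immediately yields, via Topalov--Matveev/Sarlet, a whole family of $n$ first integrals rather than the single one.
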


For a Finsler surface, the first condition of Theorem \ref{main}, $\chi=0$, is equivalent to the
fact that the Finsler metric has isotropic curvature (it is a $k$-basic Finsler metric). Also, in
dimension $2$, the mean Berwald
curvature is proportional to the vertical Hessian of the Finsler metric, the proportionality
factor, the function $\lambda$, was known since Berwald, \cite[(8.7)]{Berwald41}. Hence for Finsler surfaces, the second condition of
Theorem \ref{main} is automatically satisfied. Moreover, the first
integral $\lambda$, given by formula \eqref{isf}, reduces in the
$2$-dimensional case to the first integral $f$ obtained by Foulon and
Ruggiero in  \cite[Theorem B]{FR16}. 

For the proof of Theorem \ref{main}, the two conditions $\chi=0$ and $\operatorname{rank}(E_{ij})=n-1$ tell us that
the $S$-function is a Finsler metric, projectively related to
$F$. Then, we will obtain the first integral $\lambda$, given by \eqref{isf}, using the Painlev\'e first integral, associated to the two
projectively related Finsler metrics $F$ and $S$. 

Next theorem deals with a concrete class of Finsler metrics that satisfy
the second assumption of Theorem \ref{main}.
We say that a Finsler metric $F$ has \emph{scalar mean Berwald curvature} $f$ if the mean Berwald curvature is
proportional to the vertical Hessian of $F$, $2E_{ij}=fF_{y^iy^j}$.  

\begin{thm} \label{main2} Consider $F$ a Finsler metric that satisfies the
    following two conditions:
\begin{itemize}
\item[i)] the $\chi$-curvature vanishes;
\item[ii)] the Finsler metric has scalar mean Berwald curvature $f$. 
\end{itemize}
Then, the scalar mean Berwald curvature satisfies: 
\begin{itemize}
\item[1)] $f$ is a first integral of the Finsler metric $F$.
\item[2)] If $\dim M>2$ then the first integral $f$ is constant. 
\item[3)] If $M$ is compact and $\dim M>2$ then the first integral $f$
  vanishes identically.
\end{itemize}
\end{thm}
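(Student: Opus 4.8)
The plan is to leverage Theorem \ref{main}. Under its hypotheses we know that $\lambda$ given by \eqref{isf} is a first integral of the geodesic spray $G$. The first task is therefore to identify this determinant-ratio $\lambda$ with the scalar mean Berwald curvature $f$ in the present setting. Substituting $2E_{ij} = f F_{y^iy^j}$ into the bordered determinant, I would factor out $f$ from the first block and compute

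\begin{eqnarray*}
\lambda = \frac{-1}{\det g}\begin{vmatrix} fF\,F_{y^iy^j} & F_{y^i} \\ F_{y^j} & 0 \end{vmatrix}.
\end{eqnarray*}

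Expanding the bordered determinant along the last row/column gives $\lambda = \frac{fF}{\det g}\, \big(F_{y^iy^j}\big)^{ij}_{\mathrm{cof}} F_{y^i}F_{y^j}$ up to sign, where the cofactors of the Hessian of $F$ appear. The key classical identity here is the homogeneity relation: since $F$ is positively $1$-homogeneous, $F_{y^iy^j}y^j = 0$, so $y^i$ spans the kernel of $F_{y^iy^j}$; dually, $F_{y^i}$ is (proportional to) the corresponding kernel direction of the cofactor matrix, and the contraction $(\mathrm{cof}\,F_{y^iy^j})\,F_{y^i}F_{y^j}$ is, up to the factor $\det g$ and elementary powers of $F$, equal to $1$. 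Pinning down this normalization (the exact power of $F$ and the sign) is the one genuinely computational point, and I expect it to reduce cleanly using $g_{ij} = FF_{y^iy^j} + F_{y^i}F_{y^j}$ together with the matrix-determinant lemma. The upshot is $\lambda = f$, which proves part (1).

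**For part (2),** assume $\dim M = n > 2$. Here the plan is to bring in the $\chi$-curvature hypothesis more directly, rather than only through Theorem \ref{main}. The condition $\chi = 0$ forces the $S$-function to be (projectively equivalent to) a Finsler metric — this is precisely the structural fact invoked in the sketch of Theorem \ref{main}'s proof — and this ties the derivatives of $f$ to the Berwald curvature. Concretely, $2E_{ij} = fF_{y^iy^j}$ together with $\chi=0$ yields, after differentiating and using the Bianchi-type identities relating $B^i_{jkl}$, $E_{jk}$, and $\chi$, an overdetermined system for $f$. In dimension $n>2$ the rank constraints make this system rigid: the mixed second derivatives $\partial f/\partial y^i$ (the vertical derivative of $f$) are forced to vanish, so $f$ is constant on each fiber of $TM$, and then $G(f)=0$ from part (1) forces $f$ to be globally constant. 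I expect this rigidity step — showing the vertical derivative of $f$ vanishes when $n>2$ — to be the main obstacle; it is exactly where dimension enters and where the $2$-dimensional case genuinely differs. The dimension-$2$ obstruction is that the relevant linear-algebraic system is not overdetermined, so $f$ can vary.

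**For part (3),** suppose in addition $M$ is compact. By part (2), $f$ is a constant $c$. The strategy is a standard integral/Gauss–Bonnet-type argument: relate $f = c$ to an invariant computed by integrating a suitable curvature quantity over the (compact) unit tangent sphere bundle $SM$, or apply a divergence theorem. Since $2E_{ij} = cF_{y^iy^j}$ with $c$ constant means the mean Berwald curvature is a constant multiple of a tensor whose fiberwise integral over each $S_xM$ is the Riemannian volume form, integrating the trace $E^i{}_i$ (or a primitive of it arising from the $S$-function, whose integral is forced to vanish on a closed manifold because it is an exact term) shows the constant must be zero. Equivalently, one uses that on a compact manifold the flow of $G$ on $SM$ preserves a finite measure, so a nonzero constant $c$ would produce a nontrivial monotone quantity along geodesics, contradicting recurrence (Poincaré recurrence). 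Either route gives $c = 0$, i.e. $f \equiv 0$. The only delicate point is choosing the right global object to integrate; the recurrence argument is the cleanest and avoids curvature computations altogether.
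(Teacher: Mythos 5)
Your part (1) follows the paper's first (``partial'') proof: substitute $2E_{ij}=fF_{y^iy^j}$ into \eqref{isf} and invoke Theorem \ref{main}. Two issues here. First, the determinant computation gives $\lambda=f^{n-1}$, not $\lambda=f$: the border row and column carry no factor of $f$, and by Rund's identity \eqref{gt} the bordered determinant of $F_{y^iy^j}$ equals $-\det g/F^{n-1}$, so exactly $n-1$ powers of $f$ survive. This is harmless for the conclusion, since $G(f^{n-1})=0$ still yields $G(f)=0$. Second, and more seriously, Theorem \ref{main} requires $\operatorname{rank}(E_{ij})=n-1$, i.e.\ $f$ nowhere vanishing, which is \emph{not} among the hypotheses of Theorem \ref{main2}; the paper flags exactly this obstruction and for that reason gives a second, independent proof based on the $1$-form $\alpha=d_JS-d\tau$, which satisfies ${\mathcal L}_G\alpha=2\chi=0$ and $d\alpha=f\,dd_JF$, whence ${\mathcal L}_G(f\,dd_JF)=0$ and $G(f)=0$ with no nondegeneracy assumption on $f$. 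Your proposal does not address this.

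For part (2), your outline (vertical derivative of $f$ vanishes for $n>2$, then $G(f)=0$ forces $f$ constant) is correct, but the rigidity step you yourself call the main obstacle is left as an appeal to unspecified ``Bianchi-type identities.'' It is in fact elementary and does not use $\chi=0$ at all: $\partial E_{ij}/\partial y^k$ is totally symmetric (it is $\tfrac12\,\partial^3 S/\partial y^i\partial y^j\partial y^k$, or directly the fourth vertical derivative of $G^i$ traced), so $\frac{\partial f}{\partial y^k}h_{ij}=\frac{\partial f}{\partial y^j}h_{ik}$; contracting with $g^{il}$ and tracing gives $(n-2)\,\partial f/\partial y^k=0$. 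This is Lemma \ref{fconst} of the paper.

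Part (3) is where the genuine gap lies. The recurrence argument does not close: from $2E_{ij}=cF_{y^iy^j}$ with $c$ constant one only gets $S=cF+a_i(x)y^i$ with $da=0$ (the latter from $\chi=\tfrac12\delta_GS=0$), so along a unit-speed geodesic $\tau(T)-\tau(0)=cT+\int_\gamma a$; the line integral of the closed $1$-form $a$ along a long, only approximately closed recurrent orbit is not bounded in general, so boundedness of $\tau$ on the compact $SM$ yields no contradiction without substantial further work. The paper's actual argument is the Stokes identity of Lemma \ref{intf}: integrate $d\bigl(\alpha\wedge d_JF\wedge(dd_JF)^{n-2}\bigr)$ over $SM$, use $i_G\alpha=0$ to show $\alpha\wedge(dd_JF)^{n-1}=0$ on $SM$, and conclude $\int_{SM}f\,\omega_{SM}=0$, which with $f$ constant forces $f=0$. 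Your alternative sketch of ``integrating a primitive arising from the $S$-function'' points in this direction, but without identifying the form $\alpha$ and the vanishing of $\alpha\wedge(dd_JF)^{n-1}$ it is not a proof.
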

The proof of Theorem \ref{main2} is a direct extension, to the
$n$-dimensional case, of the techniques used by Foulon and
Ruggiero in \cite{FR16} to prove the existence of a first integral for $k$-basic
Finsler surfaces. These techniques allow to provide more information
about the first integral and one can further use \cite{CGR20} to obtain a rigidity
result for the class of Finsler metrics with vanishing
$\chi$-curvature and scalar mean Berwald curvature.    

\section{Finsler metrics: a geometric setting and some non-Riemannian
  quantities}

In this work, we assume that $M$ is an $n$-dimensional $C^{\infty}$-
manifold, of dimension $n>1$. We consider $TM$ its tangent bundle and
$T_0M=TM\setminus\{0\}$ the tangent bundle with the zero section
removed. Local coordinates on $M$, denoted by $(x^i)$, induce canonical
coordinates on $TM$ (and $T_0M$), denoted by $(x^i, y^i)$.  On $TM$
there are two canonical structures that we will  use: the Liouville
(dilation) vector field, ${\mathcal  C}=y^i\frac{\partial}{\partial
  y^i}$, and the tangent structure  (vertical endomorphism),
$J=\frac{\partial}{\partial y^i}\otimes dx^i$.   

\subsection{A geometric setting for Finsler metrics}

We will use the Fr\"olicker-Nijenhuis theory to describe the geometric
setting we follow in this work. For a vector-valued $l$-form $L$ on $T_0M$, we denote
by $i_L$ the induced $i_{\ast}$-derivation of degree $(l-1)$ and by
$d_L:=[i_L, d]$ the $d_{\ast}$ derivation of degree $l$, \cite{BD09, Grifone72, GM00, SLK14,
  Youssef94}. For two vector valued forms, an $l$-form $L$ and a
$k$-form $K$, consider the $(k+l)$-form $[K,L]$, uniquely determined
by 
\begin{eqnarray*}
d_{[K,L]}=d_Kd_L-(-1)^{kl}d_Ld_K.
\end{eqnarray*}

A \emph{spray} is a second order vector field $G\in {\mathfrak X}(T_0M)$ such
that $JG={\mathcal C}$ and $[{\mathcal C}, G]=G$. Locally, a spray can
be expressed as 
\begin{eqnarray*}
G=y^i\frac{\partial}{\partial x^i}-2G^i\frac{\partial}{\partial y^i}, 
\end{eqnarray*}
with the functions $G^i(x,y)$ positively $2$-homogeneous in $y$ ($2^+$-homogeneous). A
\emph{geodesic} of a spray $G$ is a smooth curve $c$ on $M$
whose velocity $\dot{c}$ is an integral curve of $G$, $G(\dot{c}(t))=\ddot{c}(t)$.

For a given spray $G$, an orientation preserving reparameterization $t\to
\tilde{t}(t)$, of its geodesics, leads to a new spray
$\widetilde{G}=G-2P{\mathcal C}$, where $P$ is a $1^+$-homogeneous
function on $T_0M$. We say that the  two sprays $G$ and
$\widetilde{G}$ are \emph{projectively related}, while $P$ is called the
\emph{projective factor}.    

\begin{defn} \label{fs} A \emph{Finsler structure} on $M$ is a  continuous function
$F: TM\to [0, +\infty)$ that satisfies the following conditions:
\begin{itemize}
\item[i)] $F$ is smooth on $T_0M$;
\item[ii)] $F$ is $1^+$-homogenous, $F(x,\lambda y)=\lambda F(x,y)$, $\forall
    \lambda>0$, $\forall (x,y)\in T_0M$;
\item[iii)] the metric tensor
\begin{eqnarray*}
g_{ij}=\frac{1}{2}\frac{\partial^2 F^2}{\partial y^i \partial y^j}
\end{eqnarray*} is non-degenerate on $T_0M$.
\end{itemize}
\end{defn}

A Finsler manifold is a pair $(M,F)$, with $F$ a Finsler structure on
the manifold $M$. For a Finsler manifold, one can identify the sphere
bundle $SM$ with the indicatrix bundle $IM=\{(x,y)\in TM,
F(x,y)=1\}$. Geometric objects on $T_0M$ that are invariant under
positive rescaling ($0^+$-homogeneous)
can be restricted to the sphere bundle $SM$.

For a Finsler structure $F$, the metric tensor
$g_{ij}$ can be expressed in terms of the angular metric $h_{ij}$ as follows:
\begin{eqnarray*}
g_{ij}=h_{ij}+\frac{1}{F^2}y_iy_j=h_{ij} + \frac{\partial F}{\partial
  y^i} \frac{\partial F}{\partial y^j}, \quad 
h_{ij}=F\frac{\partial^2 F}{\partial y^i\partial y^j}=FF_{y^iy^j},  
\end{eqnarray*}
where $y_i=g_{ik}y^k=F\frac{\partial F}{\partial y^i}.$ The
regularity condition iii) from Definition \ref{fs} is equivalent to
the fact that the angular metric $h_{ij}$ has rank $n-1$, \cite[Proposition
16.2]{Matsumoto86}. 

For a spray $G$ and a function $L$ on $T_0M$, we consider the
\emph{Euler-Lagrange} $1$-form
\begin{eqnarray}
\delta_GL:={\mathcal L}_Gd_JL-dL=\left\{G\left(\frac{\partial
  L}{\partial y^i}\right) -\frac{\partial L}{\partial x^i}\right\}
  dx^i. \label{el_1}
\end{eqnarray}
Every Finsler metric uniquely determines a \emph{geodesics spray},
solution of the Euler-Lagrange equation $\delta_GF^2=0$.

We recall now the geometric structures induced by a Finsler metric,
and its geodesic spray $G$. We first have the \emph{canonical nonlinear
connection}, characterised by  a horizontal and a vertical projector
on $T_0M$
\begin{equation*}
h=\frac{1}{2}(\operatorname{Id}-[G,J]), \quad v=\frac{1}{2}(\operatorname{Id}+[G,J]).
\end{equation*}
In induced local charts on $T_0M$, the two projectors can be expressed
as:
\begin{eqnarray*}
h=\frac{\delta}{\delta x^i}\otimes dx^i, \ 
  v=\frac{\partial}{\partial y^i}\otimes \delta y^i,  \ \textrm{ where
  } \ \frac{\delta}{\delta x^i}=\frac{\partial}{\partial x^i} - N_i^j
  \frac{\partial}{\partial y^j}, \ \delta y^i=dy^i + N^i_j dx^j \
  \textrm{ and } N^i_j=\frac{\partial G^i}{\partial y^j}.  
\end{eqnarray*}

\begin{lem} \label{2h}
Consider $F$ a Finsler metric and $\widetilde{F}$ a $1^+$-homogeneous
function, nowhere vanishing on $T_0M$. Then, we can
express the determinant of the metric tensor $g_{ij}$
as follows:
\begin{eqnarray}
\det g=-\frac{F^{n+1}}{\widetilde{F}^2} \begin{vmatrix}
\displaystyle\frac{\partial^2 F}{\partial y^i\partial y^j} & \displaystyle\frac{\partial
  \widetilde{F}}{\partial y^i} \vspace{2mm} \\
\displaystyle\frac{\partial \widetilde{F}}{\partial y^j} & 0 
\end{vmatrix} . \label{gg}
\end{eqnarray}   
\end{lem}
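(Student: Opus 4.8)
The plan is to compute the bordered determinant on the right-hand side of \eqref{gg} by exploiting homogeneity, and to relate it to $\det g$. First I would recall the standard identity for a bordered determinant: for an $n\times n$ symmetric matrix $A=(a_{ij})$ and a column vector $b=(b_i)$,
\begin{eqnarray*}
\begin{vmatrix} a_{ij} & b_i \\ b_j & 0 \end{vmatrix} = -\,(\det A)\, b_i b_j (A^{-1})^{ij},
\end{eqnarray*}
valid whenever $A$ is invertible. Apply this with $a_{ij}=\partial^2 F/\partial y^i\partial y^j=F_{y^iy^j}$ and $b_i=\partial\widetilde{F}/\partial y^i=\widetilde{F}_{y^i}$. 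The matrix $F_{y^iy^j}$ is $\frac{1}{F}h_{ij}$, which has rank $n-1$ and is therefore \emph{not} invertible, so one cannot apply the identity directly to $F_{y^iy^j}$; this is the main obstacle. I would circumvent it by instead working with the metric tensor $g_{ij}$, which is invertible, and using the relation $g_{ij}=F F_{y^iy^j}+ \frac{\partial F}{\partial y^i}\frac{\partial F}{\partial y^j}$ together with Euler's theorem.

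The key computation proceeds as follows. Write $\ell_i=\partial F/\partial y^i$, so that $g_{ij}=F F_{y^iy^j}+\ell_i\ell_j$ and, by $1^+$-homogeneity of $F$, $y^i\ell_i=F$ and $y^i F_{y^iy^j}=0$, hence $g_{ij}y^j=F\ell_i$. I would perform column and row operations on the bordered matrix
\begin{eqnarray*}
\begin{pmatrix} F_{y^iy^j} & \widetilde{F}_{y^i} \\ \widetilde{F}_{y^j} & 0 \end{pmatrix}
\end{eqnarray*}
to replace the block $F_{y^iy^j}$ by $\frac{1}{F}g_{ij}$. Concretely, multiply the whole matrix block through by $F$ (picking up a factor $F^n$ from the $n$ rows scaled, but being careful about the last row), then add to the $(i,j)$-block the rank-one term $\ell_i\ell_j$; since adding $\ell_i\ell_j$ changes the $i$-th row by a multiple of $\ell$, and $\ell$ already appears (up to the homogeneity factor) in the border column, this operation changes the determinant only through a controlled correction. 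More cleanly: expand the bordered determinant with block $g_{ij}$,
\begin{eqnarray*}
\begin{vmatrix} g_{ij} & \widetilde{F}_{y^i} \\ \widetilde{F}_{y^j} & 0 \end{vmatrix} = -(\det g)\, g^{ij}\widetilde{F}_{y^i}\widetilde{F}_{y^j},
\end{eqnarray*}
and separately, using $g_{ij}=FF_{y^iy^j}+\ell_i\ell_j$ and the matrix determinant lemma for a rank-one update of the bordered matrix, show that this equals $F^{n+1}$ times the determinant in \eqref{gg} divided by $\widetilde F^2$. The $\ell_i\ell_j$ correction contributes a term proportional to $(y^i\widetilde{F}_{y^i})^2=\widetilde{F}^2$ by Euler's theorem applied to $\widetilde F$, which is exactly what produces the $\widetilde F^2$ in the denominator and cancels the would-be degeneracy.

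Assembling these, one gets $-(\det g)\, g^{ij}\widetilde{F}_{y^i}\widetilde{F}_{y^j}$ on one side and $-\frac{F^{n+1}}{\widetilde F^2}$ times the target determinant combined with the quantity $g^{ij}\widetilde{F}_{y^i}\widetilde{F}_{y^j}$ on the other; after matching, the factor $g^{ij}\widetilde{F}_{y^i}\widetilde{F}_{y^j}$ cancels (it is nonzero since $g$ is positive definite on a Finsler metric and $\widetilde F$ is nowhere vanishing, so $\widetilde F_{y^i}$ is not identically zero — in fact $y^i\widetilde F_{y^i}=\widetilde F\neq 0$), yielding \eqref{gg}. I expect the delicate point to be bookkeeping the homogeneity factors of $F$ correctly through the row/column scaling and the rank-one update, so I would double-check the power $F^{n+1}$ by testing the formula on the Riemannian case $F=\sqrt{g_{ij}y^iy^j}$ with $\widetilde F=F$.
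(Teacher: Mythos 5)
Your strategy is correct and genuinely different from the paper's. You work from the bordered determinant with the \emph{invertible} block $g_{ij}$, evaluate it by the Schur complement as $-(\det g)\,g^{ij}\widetilde F_{y^i}\widetilde F_{y^j}$, and then transfer to the block $F_{y^iy^j}$ via the rank-one relation $g_{ij}=FF_{y^iy^j}+F_{y^i}F_{y^j}$ and the matrix determinant lemma. I checked that this closes: writing $b_i=\widetilde F_{y^i}$ and $D$ for the determinant in \eqref{gg}, one finds
\begin{equation*}
\begin{vmatrix} g_{ij} & b_i \\ b_j & 0\end{vmatrix}=\frac{F^{n+1}}{\widetilde F^{2}}\,\bigl(g^{ij}b_ib_j\bigr)\,D ,
\end{equation*}
where the $\widetilde F^{2}$ comes, as you say, from Euler's theorem $y^ib_i=\widetilde F$; comparing with the Schur-complement value and cancelling $g^{ij}b_ib_j$ gives \eqref{gg}. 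The paper instead takes Rund's identity \eqref{gt} as its starting point and replaces the border $F_{y^i}$ by $\widetilde F_{y^i}$ using a $g$-orthonormal horizontal frame: the discrepancy terms are borders $h^\alpha_i$ with $h^\alpha_iy^i=0$, and any bordered determinant over $F_{y^iy^j}$ with such a border vanishes because the full matrix annihilates $(y,0)$. Both arguments rest on the same underlying fact --- the adjugate of $F_{y^iy^j}$ is a rank-one multiple of $y^iy^j$, so the bordered determinant is a quadratic form in the border depending only on $y^ib_i$ --- but yours is frame-free, purely linear-algebraic, and self-contained (it reproves \eqref{gt} as the special case $\widetilde F=F$, which is also a cleaner sanity check than the Riemannian test you propose), whereas the paper's is shorter once \eqref{gt} is granted. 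One small point to make explicit when you write it up: the cancellation of $g^{ij}\widetilde F_{y^i}\widetilde F_{y^j}$ needs this scalar to be nonzero, which you justify by positive definiteness of $g$; Definition \ref{fs} literally only requires non-degeneracy, but positive definiteness is the setting the rest of the paper works in, so this is acceptable provided you state it.
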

\begin{proof}
First, we recall a formula that connects the determinant of the metric tensor
$g_{ij}$ in terms of the angular metric $h_{ij}$, \cite[(1.26)]{Rund59}:
\begin{eqnarray}
\det g=-F^{n-1}
\begin{vmatrix}
\displaystyle\frac{\partial^2 F}{\partial y^i\partial y^j} & \displaystyle\frac{\partial
  F}{\partial y^i} \vspace{2mm} \\
\displaystyle\frac{\partial F}{\partial y^j} & 0 
\end{vmatrix}. \label{gt}
\end{eqnarray}
For the metric tensor $g_{ij}$, consider $\{h_1=\frac{G}{F}$, $h_2, ...,
h_n\}$ an orthonormal horizontal basis and $\{h^i, i=\overline{1,n}\}$, the dual frame. Since,
for $\alpha \geq 2$, $h^{\alpha}(h_1)=0$, and  
\begin{eqnarray*}
h^{\alpha}=h^{\alpha}_i dx^i, \quad
  h_{1}=\frac{y^i}{F}\frac{\delta}{\delta x^i},
\end{eqnarray*}
we obtain $h^{\alpha}_iy^i=0$. Using also $h_{ij}y^j=0$, we obtain,
for each $\alpha\geq 2$, 
that on $T_0M$, 
\begin{eqnarray*}
\begin{pmatrix}
h_{ij} & h^{\alpha}_i \\
h^{\alpha}_j & 0
\end{pmatrix}
\begin{pmatrix}
y^1 \\
\vdots  \\
y^n \\
0
\end{pmatrix}=0
\end{eqnarray*}
and consequently, 
\begin{eqnarray}
\begin{vmatrix}
h_{ij} & h^{\alpha}_i \\
h^{\alpha}_j & 0
\end{vmatrix}=0. \label{halpha} 
\end{eqnarray}
The semi-basic $1$-form $d_J\widetilde{F}$ can be expressed as follows
\begin{eqnarray*}
d_J\widetilde{F} & = & \frac{\partial \widetilde{F}}{\partial y^i}dx^i =
  d_J\widetilde{F}(h_i)  h^i =
  \frac{\widetilde{F}}{F} h^1 + \sum_{\alpha=2}^n
  d_J\widetilde{F}(h_{\alpha})  h^{\alpha} \\ 
& = & \left\{
  \frac{\widetilde{F}}{F}\frac{\partial F}{\partial y^i} +
  \sum^n_{\alpha= 2}J(h_{\alpha})(\widetilde{F}) h^{\alpha}_i
  \right\} dx^i. 
\end{eqnarray*}
In the determinant from \eqref{gt}, we replace $\frac{\widetilde{F}}{F}\frac{\partial
F}{\partial y^i}$ and obtain 
\begin{eqnarray*}
& & \begin{vmatrix}
\displaystyle\frac{\partial^2 F}{\partial y^i\partial y^j}   & \dfrac{\partial
  F}{\partial y^i} \vspace{2mm}  \\
\dfrac{\partial F}{\partial y^j} & 0 
\end{vmatrix} = \left(\frac{F}{\widetilde{F}}\right)^2
\begin{vmatrix}
\displaystyle\frac{\partial^2 F}{\partial y^i\partial y^j}  &
\dfrac{\widetilde{F}}{F} \dfrac{\partial
  F}{\partial y^i} \vspace{2mm}  \\
\dfrac{\widetilde{F}}{F}  \dfrac{\partial F}{\partial y^j} & 0 
\end{vmatrix}  = \left(\frac{F}{\widetilde{F}}\right)^2 \left(
 \begin{vmatrix}
\displaystyle\frac{\partial^2 F}{\partial y^i\partial y^j}  & \dfrac{\partial
  \widetilde{F}}{\partial y^i} \vspace{2mm}  \\
 \dfrac{\partial \widetilde{F}}{\partial y^j} & 0 
\end{vmatrix} \right.\\
& - & \sum_{\alpha=2}^n \left.
\begin{vmatrix}
\displaystyle\frac{\partial^2 F}{\partial y^i\partial y^j}   &
J(h_{\alpha})(\widetilde{F})h^{\alpha}_i \vspace{2mm}  \\
J(h_{\alpha})(\widetilde{F})h^{\alpha}_i  & 0 
\end{vmatrix}\right)  
 \stackrel{\eqref{halpha}}{=}  \left(\frac{F}{\widetilde{F}}\right)^2
\begin{vmatrix}
\displaystyle\frac{\partial^2 F}{\partial y^i\partial y^j}  &
\dfrac{\partial \widetilde{F}}{\partial y^i} \vspace{2mm}  \\
\dfrac{\partial \widetilde{F}}{\partial y^j} & 0 
\end{vmatrix}. 
\end{eqnarray*}
We replace this in formula \eqref{gt} to obtain \eqref{gg} and complete the proof. 
\end{proof}

For a Finsler metric $F$, the regularity condition iii) of Definition
\ref{fs} can be reformulated in terms of the Hilbert $1$-form
$d_JF$. Since $d_JF$ is $0^+$-homogeneous, we can view it as a $1$-form
on $SM$. Due to the fact that the Hilbert $2$-form can be expressed as 
\begin{eqnarray}
dd_JF=\frac{\partial^2F }{\partial y^i\partial y^j}\delta  y^j\wedge
  dx^i = \frac{1}{F}h_{ij}\delta  y^j\wedge dx^i, \label{ddjf}
\end{eqnarray}
it follows that $d_JF$ is a contact structure on $SM$ and hence the
$(2n-1)$- form $\omega_{SM}=d_JF\wedge dd_JF^{(n-1)}$ is a volume form on $SM$.

\subsection{Non-Riemannian structures in the Finslerian setting}

The first non-Riemannian structures, associated to a Finsler metric $F$, are the
\emph{Cartan torsion} and the \emph{mean Cartan torsion}, 
\begin{eqnarray*}
C_{ijk}=\frac{1}{4}\frac{\partial^3 F^2}{\partial y^i\partial
  y^j\partial y^k}=\frac{1}{2}\frac{\partial g_{ij}}{\partial y^k},
  \quad I_k=g^{ij}C_{ijk}.
\end{eqnarray*}
A Finsler metric reduces to a Riemannian metric if and only if the
(mean) Cartan torsion vanishes.

The \emph{curvature} of the nonlinear connection determined by the
geodesic spray $G$ is defined by
\begin{eqnarray*}\label{R-curv-tensor}
R=\frac{1}{2}[h,h]=\frac{1}{2}R^i_{jk}\frac{\partial}{\partial y^i}\otimes
dx^j\wedge dx^k = \frac{1}{2}\left(\frac{\delta N^i_j}{\delta x^k} - \frac{\delta
    N^i_k}{\delta x^j} \right) \frac{\partial}{\partial y^i}\otimes
dx^j\wedge dx^k. 
\end{eqnarray*}

The canonical nonlinear connection provides a tensor derivation
on $T_0M$, the \emph{dynamical
  covariant derivative} $\nabla$, whose action on functions and vector
fields is given by \cite[(21)]{BD09}:
\begin{eqnarray*}
\nabla(f)=G(f), \forall f\in C^{\infty}(TM), \quad \nabla
  X=h[G,hX]+v[G,vX], \forall X \in {\mathfrak X}(TM).
\end{eqnarray*}

The geodesic spray $G$ of a Finsler metric induces a linear connection
on $T_0M$, the Berwald connection, \cite[\S 8.1.1]{SLK14}, with two curvature components, the \emph{Berwald
curvature} and the \emph{Riemannian curvature}, \cite[\S 6.1,\S, 8.1]{Shen01}:
\begin{eqnarray*}
B^i_{jkl}=\frac{\partial^3 G^i}{\partial y^j \partial y^k \partial y^l},
  \quad R^i_{jkl}= \frac{\partial R^i_{kl}}{\partial y^j}.
\end{eqnarray*}
The \emph{mean Berwald curvature} of a spray $G$ is defined as
\cite[Def. 6.1.2]{Shen01}
\begin{eqnarray}
E_{jk}=\frac{1}{2}B^i_{ijk} = \frac{1}{2} \frac{\partial^3
  G^i}{\partial y^i \partial y^j \partial y^k}. \label{ejk}
\end{eqnarray}
\begin{defn} \label{smbc}
A Finsler metric has \emph{scalar mean Berwald curvature} if the mean
Berwald curvature is proportional to the angular metric, which means
that there exists a $0^{+}$-homogeneous function $f$ on $T_0M$ such
that
\begin{eqnarray}
E_{ij}=\frac{1}{2}\frac{f}{F}h_{ij}=\frac{f}{2}\frac{\partial^2 F}{\partial
  y^i\partial y^j}. \label{ef}
\end{eqnarray}
\end{defn}
In \cite{CS05}, Chen and Shen study Finsler metrics of \emph{isotropic mean
Berwald curvature}, with a similar definition as above, with
$f$ being a scalar function on $M$.  

In the next lemma we prove that in dimension $n>2$, Finsler
metrics of \emph{scalar mean Berwald curvature} have isotropic mean
Berwald curvature. In other words, the scalar mean Berwald curvature $f$ is
constant along the fibres of $T_0M$.
\begin{lem} \label{fconst}
Consider $F$ a Finsler metric of \emph{scalar mean Berwald curvature}
$f$. If $n>2$, then $f$ is constant along the fibres of $T_0M$.
\end{lem}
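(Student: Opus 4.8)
The plan is to differentiate the defining relation $2E_{ij}=fF_{y^iy^j}$ with respect to $y^k$ and to exploit a symmetry that the resulting left-hand side is forced to have. Since $E_{ij}=\frac{1}{2}B^l_{lij}=\frac{1}{2}\frac{\partial^3 G^l}{\partial y^l\partial y^i\partial y^j}$, one more vertical derivative gives
\begin{eqnarray*}
2\frac{\partial E_{ij}}{\partial y^k}=\frac{\partial^4 G^l}{\partial y^l\partial y^i\partial y^j\partial y^k},
\end{eqnarray*}
which is totally symmetric in the indices $i,j,k$. On the other hand, differentiating the right-hand side of \eqref{ef},
\begin{eqnarray*}
2\frac{\partial E_{ij}}{\partial y^k}=\frac{\partial f}{\partial y^k}F_{y^iy^j}+f\,F_{y^iy^jy^k},
\end{eqnarray*}
and $F_{y^iy^jy^k}$ is itself totally symmetric in $i,j,k$. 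Hence the term $\frac{\partial f}{\partial y^k}F_{y^iy^j}$ must be totally symmetric in $i,j,k$ as well; in particular, interchanging $j$ and $k$,
\begin{eqnarray*}
\frac{\partial f}{\partial y^k}F_{y^iy^j}=\frac{\partial f}{\partial y^j}F_{y^iy^k},\qquad \forall\, i,j,k.
\end{eqnarray*}

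Next I would contract this identity with the inverse metric tensor $g^{ij}$. Using $F_{y^iy^j}=\frac{1}{F}h_{ij}$ together with the elementary relations $g^{ij}h_{ij}=n-1$ and $g^{ij}h_{ik}=\delta^j_k-\frac{1}{F^2}y^jy_k$, and recalling that $f$ is $0^+$-homogeneous so that $y^k\frac{\partial f}{\partial y^k}=0$ by the Euler identity, the left-hand side becomes $\frac{n-1}{F}\frac{\partial f}{\partial y^k}$ while the right-hand side collapses to $\frac{1}{F}\frac{\partial f}{\partial y^k}$. Therefore
\begin{eqnarray*}
(n-2)\,\frac{\partial f}{\partial y^k}=0,
\end{eqnarray*}
and since $n>2$ this forces $\frac{\partial f}{\partial y^k}=0$ on $T_0M$, i.e.\ $f$ is constant along each fibre.

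The computation is elementary, and the only point that really needs attention is the observation that $\partial E_{ij}/\partial y^k$ inherits full symmetry in $i,j,k$ from the expression $\frac{1}{2}\frac{\partial^4 G^l}{\partial y^l\partial y^i\partial y^j\partial y^k}$; this is precisely what turns the single scalar equation \eqref{ef} into an over-determined system for $f$. After that, contracting with $g^{ij}$ rather than, say, with $y^i$ (which would only produce a trivial identity, since $F_{y^iy^j}y^i=0$) is what isolates $\partial f/\partial y^k$ and brings out the factor $n-2$, making the dimension hypothesis $n>2$ enter in the expected way.
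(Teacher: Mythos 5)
Your proof is correct and follows essentially the same route as the paper: differentiate $2E_{ij}=fF_{y^iy^j}$, use the total symmetry of $\partial E_{ij}/\partial y^k$ inherited from $\tfrac{1}{2}\partial^4 G^l/\partial y^l\partial y^i\partial y^j\partial y^k$ to get $\frac{\partial f}{\partial y^k}h_{ij}=\frac{\partial f}{\partial y^j}h_{ik}$, then contract with the inverse metric and invoke the $0^+$-homogeneity of $f$ to extract the factor $n-2$. The paper's contraction (multiplying by $g^{il}$ and tracing $j=l$) is the same computation as your contraction with $g^{ij}$, so there is no substantive difference.
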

\begin{proof}
From the definition of the mean Berwald curvature \eqref{ejk} we obtain that 
its vertical derivative is a $(0,3)$-type tensor symmetric in all
three arguments, therefore for a Finsler
metric of \emph{scalar mean Berwald curvature} we have 
\begin{eqnarray*}
\frac{\partial E_{ij}}{\partial y^k}= \frac{\partial E_{ik}}{\partial
  y^j}  & \stackrel{\eqref{ef}}{\Longrightarrow} & \frac{\partial f}{\partial y^k}
  \frac{\partial^2 F}{\partial y^i\partial y^j} =  \frac{\partial f}{\partial y^j}
  \frac{\partial^2 F}{\partial y^i\partial y^k} \Longrightarrow \frac{\partial f}{\partial y^k}
  h_{ij}=  \frac{\partial f}{\partial y^j}  h_{ik} \\ &
                                                        \Longrightarrow & \frac{\partial f}{\partial y^k}
  \left(g_{ij} - \frac{1}{F^2}y_iy_j\right) =  \frac{\partial f}{\partial y^j}
  \left(g_{ik} - \frac{1}{F^2}y_iy_k\right).
\end{eqnarray*}
In the last formula above, we multiply with $g^{il}$, the inverse of
the metric tensor and obtain:
\begin{eqnarray*}
\frac{\partial f}{\partial y^k}
  \left(\delta^l_j - \frac{1}{F^2}y^ly_j\right) =  \frac{\partial f}{\partial y^j}
  \left(\delta^l_k - \frac{1}{F^2}y^ly_k\right).
\end{eqnarray*}
Now, if we use that $f$ is $0^+$-homogeneous and take the trace,
$j=l$, we obtain $(n-2){\partial f}/{\partial y^k}=0$. Since $n>2$, we
obtain that the function $f$ is constant along the fibres of $T_0M$. 
\end{proof}

Due to the $2^{+}$-homogeneity of the spray coefficients $G^i$, it
follows that $E_{ij}y^j=0$, hence $\operatorname{rank}(E_{ij})\leq n-1$. In the $2$-dimensional case, we obtain that
the mean Berwald curvature has rank $1$, it is proportional to the
angular metric $h_{ij}$ (of rank $1$ as well), and hence all $2$-dimensional Finsler manifolds
have scalar mean Berwald curvature. The proportionality factor has been known
since Berwald, \cite[(8.7)]{Berwald41}, but it has been shown only
recently that it is a first integral for $k$-basic Finsler surfaces,
\cite[Theorem B]{FR16}.

The Berwald connection is not a metric connection, with respect to the
metric tensor of a Finsler structure. Due to this non-metricity property of the Berwald
connection, it follows that the $(0,4)$-type Riemann curvature tensor
$R_{ijkl}=g_{is}R^s_{jkl}$ is not skew-symmetric in the first two
indices, \cite[(10.6)]{Shen01}, and hence $R^i_{ikl}\neq 0$. A measure of this failure is given by
the $\chi$-curvature, \cite[Lemma 3.1]{Shen20}:
\begin{eqnarray*}
\chi_j=-\frac{1}{2}R^i_{ijk}y^k.
\end{eqnarray*}
This non-Riemannian quantity has been introduced by Shen in
\cite{Shen13}. 

The key ingredients we will use in this work are the $\chi$-curvature,
the mean Berwald curvature, and the fact that both curvature tensors can be expressed
in terms of yet another non-Riemannian quantity, the $S$-function. 

For a fixed vertically invariant volume form $\sigma(x) dx^1\wedge dx^2 \wedge \cdots \wedge
dx^n \wedge dy^1\wedge dy^2 \wedge \cdots \wedge
dy^n$ on $TM$, \cite[p. 490]{SLK14}, we consider the Shen-function ($S$-\emph{function}) and the
\emph{distortion}  $\tau$, \cite[\S 5.2]{Shen01},
\begin{eqnarray}
S=G\left(\tau\right), \quad \tau=\frac{1}{2}\ln{\frac{\det g}{\sigma}}. \label{scurvature}
\end{eqnarray}  

From the various expressions of the $\chi$-curvature, we will use its
expression in terms of the $S$-function, \cite[(1.10)]{Shen13},
\begin{eqnarray}
\chi=\frac{1}{2}\delta_GS=\frac{1}{2}\left\{\nabla\left(\frac{\partial
  S}{\partial y^i}\right)-\frac{\delta S}{\delta x^i}\right\}dx^i. \label{chi}
\end{eqnarray}

The mean Berwald curvature can also  be expressed in terms of the
$S$-function as follows, \cite[(6.13)]{Shen01}:
\begin{eqnarray}
E_{ij}=\frac{1}{2}\frac{\partial^2 S}{\partial y^i\partial
  y^j}. \label{es}
\end{eqnarray}
In view of formula \eqref{es}, the second assumption of Theorem
\ref{main} or \ref{main2}, assures that the vertical Hessian of the
$S$-function has maximal rank $(n-1)$. Therefore, we can interpret the
$S$-function as a Finsler metric on its own.

\section{Proof of Theorem \ref{main}}

For the proof of Theorem \ref{main}, we proceed with the
following steps. We show first that the two assumptions of Theorem
\ref{main} assure that the $S$-function is a Finsler metric,
projectively related to $F$. Then, we obtain the first integral
\eqref{isf} using the Painlev\'e first
integral associated to the two projectively related Finsler metrics
$S$ and $F$. 

Two Finsler metrics $F$ and $\widetilde{F}$ are projectively related if
their geodesic sprays $G$ and $\widetilde{G}$ are projectively
related. One can characterise projective equivalence of two Finsler
metrics $F$ and $\widetilde{F}$ using the following equivalent forms of
Rapcs\'ak equations, \cite[\S 9.2.3]{SLK14}:
\begin{itemize}
\item[($R_1$)] $\delta_G\widetilde{F}=0$; 
\item[($R_2$)] $d_hd_J\widetilde{F}=0$.
\end{itemize}

In Riemannian geometry, Topalov and Matveev \cite[Theorem 1]{TM03}
associate to each pair of geodesically equivalent metrics a set of
$n$ first integrals. An extension of this result, to the Finslerian
setting, has been proposed by Sarlet in \cite{Sarlet07} and his Ph.D
student Vermeire \cite{Vermeire06}. 

In the next lemma, we show that two projectively related Finsler
metrics $F$ and $\widetilde{F}$ induce a first integral (Painlev\'e first
integral). This first integral, given by formula \eqref{io}, is the
Finslerian extension of the first integral determined by two projectively
equivalent Riemannian metrics, \cite[Theorem 2]{Matveev06}.
\begin{lem} \label{painleve}
Consider $F$ and $\widetilde{F}$, two projectively related Finsler
metrics. Then, 
\begin{eqnarray}
I_0=\frac{\widetilde{F}}{F}\left(\frac{\det g}{\det \widetilde{g}}\right)^{\frac{1}{n+1}} \label{io}
\end{eqnarray}
is a first integral for $F$.
\end{lem}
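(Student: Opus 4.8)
The plan is to show that $G(I_0)=0$ by differentiating the defining expression along the spray and using the Rapcs\'ak equation in the form $\delta_G\widetilde F=0$, together with the formula $S=G(\tau)$ for how $G$ acts on the logarithm of $\det g$. Since $I_0=\exp\left(\ln\widetilde F-\ln F+\tfrac{1}{n+1}(\ln\det g-\ln\det\widetilde g)\right)$, it suffices to prove that
\begin{eqnarray*}
G(\ln\widetilde F)-G(\ln F)+\frac{1}{n+1}\left(G(\ln\det g)-G(\ln\det\widetilde g)\right)=0.
\end{eqnarray*}
The first observation is that $G$, the geodesic spray of $F$, satisfies $G(F)=0$ (this is immediate from $\delta_GF^2=0$ together with $1^+$-homogeneity, since $F^2=y_i\partial F^2/\partial y^i/2$ and contracting the Euler--Lagrange expression with $y^i$ gives $2G(F^2)=0$), so the term $G(\ln F)$ drops out entirely. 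The remaining task is to compute $G(\ln\widetilde F)$ and the two $\det$-terms and see that they cancel.

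First I would handle $G(\ln\widetilde F)$. Because $F$ and $\widetilde F$ are projectively related, their sprays differ by $\widetilde G=G-2P\mathcal C$ for a $1^+$-homogeneous projective factor $P$; since $\widetilde G(\widetilde F)=0$ by the same argument applied to $\widetilde F$, and $\mathcal C(\widetilde F)=\widetilde F$ by homogeneity, we get $G(\widetilde F)=\widetilde G(\widetilde F)+2P\mathcal C(\widetilde F)=2P\widetilde F$, hence $G(\ln\widetilde F)=2P$. Alternatively one can extract $P$ directly from the Rapcs\'ak equation $\delta_G\widetilde F=0$: writing out \eqref{el_1} with $L=\widetilde F$ and using the homogeneity relations yields $G(\widetilde F)=2P\widetilde F$ with $P$ the projective factor, but I would prefer the spray-difference route as it is cleaner. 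Next, for the determinant terms, the standard variational formula for the geodesic spray gives $G(\ln\det g)=2\tfrac{\partial G^i}{\partial y^i}$ (differentiate $\det g$ along $G$; the Christoffel-type identity $G(\ln\sqrt{\det g})=\partial G^i/\partial y^i$ is classical and also follows from \eqref{scurvature} once one notes $\tau=\tfrac12\ln(\det g/\sigma)$ with $\sigma$ vertically invariant, so $G(\tau)=\tfrac12 G(\ln\det g)-\tfrac12 G(\ln\sigma)$ and $G(\ln\sigma)$ contributes the $\partial G^i/\partial x^i$ versus $\partial N^i_i/\partial y^i$ bookkeeping — cleanest is to just use $G(\ln\det g)=2\partial G^i/\partial y^i$ directly). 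Similarly $\widetilde G(\ln\det\widetilde g)=2\partial\widetilde G^i/\partial y^i$, and since $\widetilde G^i=G^i+P y^i$ we get $\partial\widetilde G^i/\partial y^i=\partial G^i/\partial y^i+ nP+\mathcal C(P)=\partial G^i/\partial y^i+(n+1)P$ using $\mathcal C(P)=P$. But I need $G(\ln\det\widetilde g)$, not $\widetilde G(\ln\det\widetilde g)$; these differ by $2P\mathcal C(\ln\det\widetilde g)$, and $\det\widetilde g$ is $0^+$-homogeneous in $y$ so $\mathcal C(\ln\det\widetilde g)=0$, giving $G(\ln\det\widetilde g)=\widetilde G(\ln\det\widetilde g)=2\partial G^i/\partial y^i+2(n+1)P$.

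Assembling the pieces:
\begin{eqnarray*}
G(\ln I_0) &=& G(\ln\widetilde F)-G(\ln F)+\frac{1}{n+1}\left(G(\ln\det g)-G(\ln\det\widetilde g)\right)\\
&=& 2P - 0 + \frac{1}{n+1}\left(2\frac{\partial G^i}{\partial y^i} - 2\frac{\partial G^i}{\partial y^i}-2(n+1)P\right) = 2P - 2P = 0,
\end{eqnarray*}
and therefore $G(I_0)=I_0\,G(\ln I_0)=0$, so $I_0$ is a first integral for $F$.

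The main obstacle I expect is the bookkeeping around the volume-form/determinant derivative: one must be careful that the identity $G(\ln\det g)=2\partial G^i/\partial y^i$ is stated with respect to the right connection coefficients $N^i_j=\partial G^i/\partial y^j$, and that the vertically-invariant reference volume $\sigma$ in \eqref{scurvature} drops out of the difference $G(\ln\det g)-G(\ln\det\widetilde g)$ (it does, since the same $\sigma$ serves both metrics and $G(\ln\sigma)$ is common to both up to the $\mathcal C$-homogeneity terms which vanish). Everything else is homogeneity arithmetic: the repeated use of Euler's relation $\mathcal C(L)=kL$ for $k^+$-homogeneous $L$ (applied to $F$, $\widetilde F$, $P$, $\det g$, $\det\widetilde g$) and the elementary fact $G(F)=0$ for a Finsler metric's own geodesic spray. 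I would present the computation in exactly the three blocks above — $G(\ln F)=0$, $G(\ln\widetilde F)=2P$, and the $\det$-difference $=-2(n+1)P$ — and conclude.
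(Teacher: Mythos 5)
Your proposal is correct and follows essentially the same route as the paper: both arguments reduce to the two expressions for the projective factor, $P=\tfrac12 G(\ln\widetilde F)$ and $P=\tfrac{1}{n+1}(\widetilde N^i_i-N^i_i)$ combined with the identity $N^i_i=\tfrac12 G(\ln\det g)$ (your $G(\ln\det g)=2\,\partial G^i/\partial y^i$ is exactly this, which the paper derives from $\nabla g_{ij}=0$). If anything, you are slightly more careful than the paper on one point it leaves implicit, namely passing from $\widetilde G(\ln\det\widetilde g)$ to $G(\ln\det\widetilde g)$ via the $0^+$-homogeneity of $\det\widetilde g$.
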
 
\begin{proof}
For a Finsler metric $F$, the dynamical covariant derivative of its
metric tensor vanishes, \cite{B07}, hence:
\begin{eqnarray*}
\nabla(g_{ij})=G(g_{ij})-g_{im}N_j^m-g_{mj}N_i^m=0.
\end{eqnarray*} 
Contracting with $g^{ij}$, we obtain
\begin{eqnarray*}
g^{ij}G(g_{ij})=g^{ij}(g_{im}N_j^m+g_{mj}N_i^j)=2N_i^i, \quad
  \textrm{and \ hence \ } 
  N_i^i=\frac{1}{2}G(\ln (\det g) ).
\end{eqnarray*}
The two Finsler metrics $F$ and $\widetilde{F}$ being projectively
related, their geodesic sprays and nonlinear connections are connected through 
\begin{equation*}
\widetilde{G}=G-2P\mathcal{C}, \quad \widetilde{G^i}=G^i+Py^i, \quad \widetilde{N^i_j}=N^i_j+\frac{\partial P}{\partial y^j}y^i+P\delta^i_j.
\end{equation*}
If in the last formula above we take the trace $i=j$, it follows that the projective factor $P$ is given by 
\begin{eqnarray*}
P = \frac{1}{n+1}(\widetilde{N}^i_i-N_i^i) =
  \frac{1}{2(n+1)}G\left(\ln \left( \frac{\det \widetilde{g}}{\det
  g}\right)\right). \label{p1} 
\end{eqnarray*}
We also use the alternative expression of the projective factor $P$,
\begin{eqnarray*}
P=\frac{G(\widetilde{F})}{2\widetilde{F}}=\frac{1}{2}G\left(\ln \widetilde{F}\right).
\end{eqnarray*}
By comparing the two expressions of the projective factor $P$, we
obtain $G(I_0)=0$, which concludes the proof of our lemma.
\end{proof}

We will give the proof of Theorem \ref{main} now. The second
assumption ii) on Theorem \ref{main} together with formula \eqref{es}
assure that the angular metric of the $S$-function has rank $n-1$ and
therefore $S$ is a Finsler metric. The vanishing of the
$\chi$-curvature \eqref{chi} assures that the Finsler metric $S$ is
projectively related to $F$. In view of Lemma \ref{painleve} we obtain
that 
\begin{eqnarray*}
I_0=\frac{S}{F}\left(\frac{\det g}{\det s}\right)^{\frac{1}{n+1}}
\end{eqnarray*}
is a first integral for the Finsler metric $F$.

We will use Lemma \ref{2h} for the Finsler metric $S$ and the
$1^+$-homogenous function $F$. According to formula \eqref{gg}, we can
express the determinant of the metric tensor $s_{ij}$ as follows:
\begin{eqnarray*}
\det s=-\frac{S^{n+1}}{F^2} \begin{vmatrix}
\displaystyle\frac{\partial^2 S}{\partial y^i\partial y^j}  &
\dfrac{\partial F}{\partial y^i} \vspace{2mm}  \\
\dfrac{\partial F}{\partial y^j} & 0 
\end{vmatrix} \stackrel{\eqref{es}}{=} -\frac{S^{n+1}}{F^2} \begin{vmatrix}
2E_{ij}  &
\dfrac{\partial F}{\partial y^i} \vspace{2mm}  \\
\dfrac{\partial F}{\partial y^j} & 0 
\end{vmatrix} = -\frac{S^{n+1}}{F^{n+1}} \begin{vmatrix}
2FE_{ij}  &
\dfrac{\partial F}{\partial y^i} \vspace{2mm}  \\
\dfrac{\partial F}{\partial y^j} & 0 
\end{vmatrix}. . 
\end{eqnarray*}
Since $I_0$ is a first integral for the Finsler metric $F$, it follows that 
\begin{eqnarray*}
\frac{1}{I_0^{n+1}} =
  \frac{ F^{n+1}}{ S^{n+1}} \frac{\det s}{\det
  g} =  \frac{-1}{\det g}\begin{vmatrix}
2FE_{ij}  &
\dfrac{\partial F}{\partial y^i} \vspace{2mm}  \\
\dfrac{\partial F}{\partial y^j} & 0 
\end{vmatrix}
\end{eqnarray*}
is also a first integral for $F$ that coincides with $\lambda$ given by formula \eqref{isf}.

The first two assumptions of Theorem \ref{main} tell us that $S$ is a
Finsler metric projectively related to $F$. One can use this and \cite[Theorem 1]{TM03} and
\cite[Theorem 2]{Sarlet07} to provide a set of $n$ first integrals for
Finsler metric with vanishing $\chi$-curvature and mean Berwald
curvature of maximal rank. 

\section{Proof of Theorem \ref{main2}}

\subsection{Partial proof of Theorem \ref{main2}}

First we prove the first two conclusions of Theorem \ref{main2}, using
Theorem \ref{main}. For this proof it is essential that the scalar mean
Berwald curvature $f$ is nowhere vanishing, hence we cannot reach the
third conclusion of Theorem \ref{main2} using these techniques. 

In view of the equivalence of the two Rapcs\'ak equations $R_1$ and $R_2$, we can
 reformulate the vanishing of the $\chi$-curvature \eqref{chi} as 
$d_hd_JS=0$. Using also the assumption that $F$ has scalar mean
Berwald curvature, we obtain that the Hilbert $2$-form of the $S$-function can be
written as follows
\begin{eqnarray}
\nonumber dd_JS & = & d_vd_JS=\dfrac{\partial^2S}{\partial y^i\partial y^j}\delta
  y^i\wedge d x^j = 2E_{ij}\delta y^i\wedge
  dx^j\\ 
& = & f \dfrac{\partial^2F}{\partial y^i\partial y^j}\delta
  y^i\wedge d x^j = fdd_JF. \label{ddjs}
\end{eqnarray}

For a non-vanishing scalar mean Berwald curvature $f$, it follows from
\eqref{ddjs} that $\operatorname{rank}\left(\dfrac{\partial^2S}{\partial y^i\partial
    y^j}\right)=n-1$ and hence $S$ is a Finsler metric.

We will express now, the first integral $\lambda$,
\eqref{isf}, using the assumption that $F$ has scalar mean Berwald
curvature $f$. 
We have 
\begin{eqnarray}
\lambda =\frac{-1}{\det g}  \begin{vmatrix}
2FE_{ij} & \displaystyle\frac{\partial
  F}{\partial y^i} \vspace{2mm} \\
\displaystyle\frac{\partial F}{\partial y^j} & 0 
\end{vmatrix} = \frac{-F^{n-1}}{\det g}  \begin{vmatrix}
f \dfrac{\partial^2 F}{\partial y^i \partial y^j} & \displaystyle\frac{\partial
  F}{\partial y^i} \vspace{2mm} \\
\displaystyle\frac{\partial F}{\partial y^j} & 0 
\end{vmatrix} \stackrel{\eqref{gt}}{=} f^{n-1}.
\end{eqnarray}
Since $\lambda$ is a first integral, it follows that $f$ is also a
first integral for $F$, and this proves the first conclusion of Theorem \ref{main2}.

According to Lemma \ref{fconst}, the scalar mean Berwald curvature
$f$ is a scalar function on $M$, which means that $d_Jf=0$. We use now that $G(f)=0$, which can
be written as $\nabla f=0$. If we apply $d_J$ to this formula and use the commutation
rule for $\nabla$ and $d_J$, \cite[(2.11)]{BC15}, we obtain
\begin{eqnarray*}
0=d_J\nabla f=\nabla d_J f + d_hf.
\end{eqnarray*}  
Therefore, $d_hf=0$ and hence $f$ is a constant, which proves the
second conclusion of Theorem \ref{main2}.

\subsection{Complete proof of Theorem \ref{main2}}

In this section we present a proof of Theorem \ref{main2}, independent
of the results of Theorem \ref{main}, by
extending to the $n$-dimensional case, the techniques of
\cite{FR16}. This method allows to provide more information about the
first integral, when the base manifold is compact. 
 
The mean Cartan torsion can be expressed in terms of the
distortion $\tau$, and it does not depend on the fixed volume form on
$M$, 
\begin{eqnarray*}
I_k=\frac{1}{2}g^{ij}\frac{\partial g_{ij}}{\partial
  y^k}=\frac{\partial}{\partial y^k}(\ln\sqrt{\det g})=\frac{\partial
  \tau}{\partial y^k}, \ 
 I=I_kdx^k=d_J(\ln\sqrt{\det g}) = d_J\tau.
\end{eqnarray*}

The key ingredient in this proof is the following $1$-form
\begin{eqnarray}
\label{alpha1} \alpha & = & i_{[J,G]}{\mathcal L}_G I = \nabla I_k dx^k - I_k \delta
  y^k =\nabla d_J\tau - d_v\tau \\
& = & d_J\nabla \tau - d_h\tau - d_v\tau =
  d_J\nabla \tau - d\tau = d_JS - d\tau. \nonumber
\end{eqnarray}
In the $2$-dimensional case, this form reduces to the form $\alpha$ from \cite[\S 2]{FR16}.

We will use the last expression from \eqref{alpha1} of the form $\alpha$ to connect it
with the $\chi$-curvature:
\begin{eqnarray}
{\mathcal L}_G\alpha = {\mathcal L}_Gd_JS - {\mathcal L}_Gd\tau =
  {\mathcal L}_Gd_JS - dS=\delta_GS=2\chi. \label{lgalpha} 
\end{eqnarray}
In view of this formula, the $\chi$-curvature vanishes if and only if
the form $\alpha$ is invariant by the geodesic flow. Moreover, the
$\chi$-curvature vanishes if and only if the $S$-function satisfies the Rapcs\'ak equation $\delta_GS=0$, which is
equivalent to $d_hd_JS=0$.  

Therefore, we can express the $2$-form $d\alpha$ as follows:
\begin{eqnarray*}
d\alpha = dd_JS=d_hd_JS+d_vd_JS=\frac{\partial^2S}{\partial y^i\partial
  y^j} \delta y^i\wedge dx^j= 2E_{ij} \delta y^i\wedge dx^j.
\end{eqnarray*}

If we consider now the assumption that the Finsler metric has scalar
mean Berwald curvature, then the $2$-form $d\alpha$ is proportional to
the Hilbert $2$-form $dd_JF$:
\begin{eqnarray}
d\alpha=2E_{ij} \delta y^i\wedge dx^j = \frac{f}{F}h_{ij}\delta
  y^i\wedge dx^j= f dd_JF. \label{dalpha}
\end{eqnarray}

From formula \eqref{lgalpha} we obtain that $\chi=0$ implies
${\mathcal L}_G\alpha=0$ and therefore ${\mathcal L}_Gd\alpha=0$. In
view of formula \eqref{dalpha} and using the fact that  ${\mathcal
  L}_Gdd_JF=0$ we obtain $G(f)=0$, which means that the scalar
mean Berwald curvature $f$ is a first integral for the geodesic flow
$G$.

Using Lemma \ref{fconst} we obtain that the scalar
mean Berwald curvature $f$ is a scalar function on $M$, hence
$df=d_hf$. From formula \eqref{dalpha}, we obtain 
\begin{eqnarray*}
0=d^2\alpha = df\wedge dd_JF = d_hf\wedge d_vd_JF=\frac{1}{2}\left(\frac{\partial
  f}{\partial x^i} h_{kj} - \frac{\partial f}{\partial x^j} h_{ki}
  \right) dx^i\wedge dx^j\wedge \delta y^k. 
\end{eqnarray*}
It follows that
\begin{eqnarray*}
\frac{\partial f}{\partial x^i} h_{kj} = \frac{\partial f}{\partial
  x^j} h_{ki} \Longrightarrow \frac{\partial f}{\partial x^i} \left(
  g_{kj} - \frac{1}{F^2} y_ky_j\right)  = \frac{\partial f}{\partial
  x^j} \left( g_{ki} - \frac{1}{F^2} y_ky_i\right). 
\end{eqnarray*}
In the last formula above, we multiply with $g^{il}$ and obtain:
\begin{eqnarray*}
\frac{\partial f}{\partial x^i} \left(
  \delta^l_j - \frac{1}{F^2} y^ly_j\right)  = \frac{\partial f}{\partial
  x^j} \left( \delta^l_i - \frac{1}{F^2} y^lky_i\right).
\end{eqnarray*}
If we take the trace $l=j$, we obtain 
\begin{eqnarray*}
(n-2) \frac{\partial f}{\partial x^i} =-\frac{1}{F^2}G(f)y_i.
\end{eqnarray*}
Now using that $G(f)=0$, we obtain that the scalar function $f$ is
constant if $\dim M>2$.

To complete the proof of Theorem \ref{main2}, we need the
following lemma that gives new properties for the first integral $f$ and can be
useful for some rigidity results. 
\begin{lem} \label{intf}
Let $(M, F)$ be a compact Finsler manifold with vanishing
$\chi$-curvature and of scalar mean Berwald curvature $f$. Then,
\begin{eqnarray}
\int_{SM}f\omega_{SM}=0. \label{intfo}
\end{eqnarray}
\end{lem}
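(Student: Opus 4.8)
The plan is to integrate the identity $d\alpha = f\, dd_JF$ from \eqref{dalpha} against the volume form $\omega_{SM} = d_JF \wedge (dd_JF)^{n-1}$ on the compact manifold $SM$, and then identify the resulting integral of a total differential as zero via Stokes' theorem. First I would note that, since $f$ is $0^+$-homogeneous and $\alpha$, $d_JF$ are $0^+$-homogeneous (the latter because $d_JF$ descends to $SM$), all the forms involved descend to $SM$, so the computation can legitimately be carried out there. The key algebraic step is to wedge \eqref{dalpha} with $d_JF \wedge (dd_JF)^{n-2}$: this gives
\begin{eqnarray*}
d\alpha \wedge d_JF \wedge (dd_JF)^{n-2} = f\, d_JF \wedge (dd_JF)^{n-1} = f\, \omega_{SM}.
\end{eqnarray*}
So $\int_{SM} f\, \omega_{SM} = \int_{SM} d\alpha \wedge d_JF \wedge (dd_JF)^{n-2}$.

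The second step is to recognize the right-hand side as exact. Because $dd_JF$ is closed (it is itself a differential), $d_JF \wedge (dd_JF)^{n-2}$ is closed, and hence
\begin{eqnarray*}
d\alpha \wedge d_JF \wedge (dd_JF)^{n-2} = d\!\left( \alpha \wedge d_JF \wedge (dd_JF)^{n-2} \right),
\end{eqnarray*}
the sign working out because $d(dd_JF) = 0$. Since $SM$ is a compact manifold without boundary (here I use that $M$ is compact and the fibres of $SM \to M$ are compact spheres), Stokes' theorem gives $\int_{SM} d\!\left( \alpha \wedge d_JF \wedge (dd_JF)^{n-2} \right) = 0$, which yields \eqref{intfo}.

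The main obstacle I anticipate is purely bookkeeping rather than conceptual: one must be careful that $\omega_{SM} = d_JF \wedge (dd_JF)^{n-1}$ is genuinely a $(2n-1)$-form on the $(2n-1)$-dimensional manifold $SM$ (it is, being the contact volume form recalled after \eqref{ddjf}), and that the degree counts in the wedge products match — $d\alpha$ has degree $2$, $d_JF$ degree $1$, and $(dd_JF)^{n-2}$ degree $2(n-2)$, for a total of $2n-1$, which is correct. One should also confirm the orientation/sign conventions so that the identity $f\,\omega_{SM} = d\alpha \wedge d_JF \wedge (dd_JF)^{n-2}$ holds with a genuine $+$ sign; since $dd_JF$ is a $2$-form it commutes past everything in the wedge, so no sign issues arise from reordering, and the only thing to check is that $d_JF \wedge (dd_JF)^{n-1}$ as written equals $\omega_{SM}$ exactly, which is the definition given. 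With these verifications in place the argument is complete.
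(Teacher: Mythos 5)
Your reduction of $\int_{SM} f\,\omega_{SM}$ to $\int_{SM} d\alpha \wedge d_JF \wedge (dd_JF)^{n-2}$ via \eqref{dalpha} is correct and is exactly the paper's starting point. But the Stokes step contains a genuine error: you claim that $d_JF \wedge (dd_JF)^{n-2}$ is closed ``because $dd_JF$ is closed.'' Closedness of $dd_JF$ is not enough, since $d_JF$ itself is not closed: $d\bigl(d_JF\bigr)=dd_JF\neq 0$ (indeed $dd_JF$ is the Hilbert $2$-form, and $(dd_JF)^{n-1}$ is nowhere zero on $SM$ precisely because $\omega_{SM}=d_JF\wedge(dd_JF)^{n-1}$ is a volume form). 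The correct Leibniz expansion is
\begin{eqnarray*}
d\left(\alpha \wedge d_JF \wedge (dd_JF)^{n-2}\right) = d\alpha \wedge d_JF \wedge (dd_JF)^{n-2} - \alpha \wedge (dd_JF)^{n-1},
\end{eqnarray*}
so Stokes' theorem gives $\int_{SM} f\,\omega_{SM}=\int_{SM}\alpha\wedge(dd_JF)^{n-1}$, and you still owe an argument that this last integral vanishes.

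The paper supplies exactly that missing piece: it shows $\alpha\wedge(dd_JF)^{n-1}=0$ pointwise on $SM$. The reason is that $i_G\alpha=0$ (which follows from $\alpha=\nabla I_k\,dx^k - I_k\,\delta y^k$ together with $I_ky^k=0$ and $\delta y^k(G)=0$), so in the coframe adapted to the contact structure $\alpha$ has no $d_JF$-component and lies in the annihilator of $G$, i.e.\ in the span of the coframe of the $(2n-2)$-dimensional distribution $\operatorname{Ker}(d_JF)$; since $(dd_JF)^{n-1}$ is already a top-degree form on that distribution, wedging it with any such $1$-form gives zero. Without this step (or some substitute for it) your argument does not go through, so the proposal as written has a real gap rather than a mere bookkeeping issue.
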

\begin{proof}
By Stokes Theorem we have that
\begin{eqnarray*}
0=\int_{SM}d\left(\alpha \wedge d_JF \wedge (dd_JF)^{n-2}\right) =
  \int_{SM} d\alpha \wedge d_JF \wedge (dd_JF)^{n-2} - \int_{SM}\alpha
  \wedge (dd_JF)^{n-1}.
\end{eqnarray*}
We will prove now that on $SM$, $\alpha\wedge (dd_JF)^{n-1} =0$.

Let $\lambda_1, .., \lambda_{n-1}$ be the non-zero eigenvalues of
the angular metric $h_{ij}$, $h_1, ..., h_{n-1}$ the corresponding
horizontal eigenvectors and $v_i=Jh_i$, $i\in \{1,...,n-1\}$. Then, 
$\{h_1,..., h_{n-1}, v_1, ..., v_{n-1}\}$ is a local frame of the $(2n-2)$-dimensional
distribution $\operatorname{Ker}(d_JF)$ on $SM$. We consider also the local co-frame $\{h^1,...,
h^{n-1}, v^1, ..., v^{n-1}\}$. Using the expression \eqref{ddjf} of the
Hilbert $2$-form, $dd_JF$, it follows that 
\begin{eqnarray*}
(dd_JF)^{n-1} =\lambda_1 \cdots \lambda_{n-1} h^1\wedge \cdots
  h^{n-1}\wedge v^1\cdots \wedge v^{n-1}.
\end{eqnarray*}
Since $i_G\alpha=0$, it follows that $\alpha \in \operatorname{span}
\{h_1,..., h_{n-1}, v_1, ..., v_{n-1}\}= \operatorname{Ker}(d_JF)$, we obtain that $\alpha\wedge
(dd_JF)^{n-1} =0$ on $SM$. Now, using \eqref{dalpha}, we obtain 
\begin{eqnarray*}
0=\int_{SM} d\alpha \wedge d_JF \wedge (dd_JF)^{n-2} = \int_{SM} f dd_JF \wedge d_JF \wedge (dd_JF)^{n-2}=\int_{SM}f\omega_{SM}.
\end{eqnarray*}
\end{proof}
If $\dim M>2$ then $f$ is constant and using formula \eqref{intfo} we obtain that
$f=0$, which completes the proof of Theorem \ref{main2}.

Existence of first integrals for Finsler manifolds can be used to
provide rigidity results under some topological restrictions: 
\begin{itemize}
\item compact surface, without conjugate points and of genus greater
  than one, \cite[Theorem A]{FR16}; 
\item compact manifold, without conjugate points and of uniform visibility, for
dimension $n>2$, \cite[Theorem A]{CGR20}. 
\end{itemize}

If $M$ is a compact manifold of dimension $n>2$, with
vanishing $\chi$-curvature and of scalar mean Berwald curvature $f$, we obtain that
$f=0$. Using formula \eqref{dalpha}, it follows that the form
$\alpha$, given by \eqref{alpha1}, is closed. Using the assumptions
of \cite[Theorem A]{CGR20} we can conclude that the form $\alpha$
is exact. Assume $\alpha=dh$, for some function $h$ on $T_0M$. Since
$i_G\alpha=0$, it follows that $G(h)=0$ and $h$ is a first integral
for the geodesic flow. Using again \cite[Theorem A]{CGR20} we obtain
that the function $h$ is constant, then $\alpha=0$. The expression
\eqref{alpha1} of the form $\alpha$ allows to conclude that the mean Cartan tensor vanishes,
$I=0$, and hence $(M,F)$ is a Riemannian manifold.

\subsection*{Acknowledgments} We express our thanks to J\'ozsef Szilasi for his comments and
suggestions on this work.

\end{document}